\numberwithin{equation}{section}
\newtheorem{theorem}{Theorem}[section]
\newtheorem{proposition}[theorem]{Proposition}
\newtheorem{cor}[theorem]{Corollary}
\newtheorem{rem}[theorem]{Remark}
\newtheorem{hypothesis}[theorem]{Hypothesis}
\newcommand{\ind}{\mathbf{1}}
\renewcommand{\tilde}{\widetilde}
\newcommand{\cP}{{\ensuremath{\mathcal P}} }
\newcommand{\cL}{{\ensuremath{\mathcal L}} }
\newcommand{\cW}{{\ensuremath{\mathcal W}} }
\newcommand{\bP}{{\ensuremath{\mathbf P}} }
\newcommand{\bE}{{\ensuremath{\mathbf E}} }
\DeclareMathSymbol{\leqslant}{\mathalpha}{AMSa}{"36} 
\DeclareMathSymbol{\geqslant}{\mathalpha}{AMSa}{"3E} 
\DeclareMathSymbol{\eset}{\mathalpha}{AMSb}{"3F}     
\renewcommand{\leq}{\;\leqslant\;}                   
\renewcommand{\geq}{\;\geqslant\;}                   
\newcommand{\maxtwo}[2]{\max_{\substack{#1 \\ #2}}} 
\newcommand{\bbR}{{\ensuremath{\mathbb R}} }
\newcommand{\norm}[1]{\left\lVert#1\right\rVert}
\title{A note on Fokker-Planck equations and graphons}
\email{fabio.coppini@unifi.it}
\author{Fabio Coppini}
\address{Dipartimento di Matematica Ulisse Dini, {Università degli Studi di Firenze}, {Viale Giovanni Battista Morgagni, 67/a}, {Florence}, {50134}, {Italy}}
\begin{document}

\begin{abstract}
{Fokker-Planck equations represent a suitable description of the finite-time behavior for a large class of particle systems as the size of the population tends to infinity. Recently, the theory of graph limits has been introduced in the classical mean-field framework to account for heterogeneous interactions among particles. In many instances, such network heterogeneity is preserved in the limit which turns from being a single Fokker-Planck equation (also known as McKean-Vlasov) to an infinite system of non-linear partial differential equations (PDE) coupled by means of a graphon. While appealing from an applied viewpoint, few rigorous results exist on the graphon particle system. This note addresses such limit system focusing on the relation between interaction network and initial conditions: if the system initial datum and the graphon degrees satisfy a suitable condition, a significantly simpler representation of the solution is available.
	
	Interesting conclusions can be drawn from this result: substantially different graphons can give rise to exactly the same particle behavior, e.g., if the initial condition is independent of the graphon, the particle behavior is indistinguishable from the well-known mean-field limit as long as the graphon has constant degree; step-kernels, a building block of graph limit theory, can be used to approximate the graphon particle system with a finite system of Fokker-Planck equations and an explicit rate of convergence.
}
	\\
	\\
	\textit{2020 MSC:} {35Q84, 05C82, 82B20, 60H20, 60K35, 35Q70.}
	\\
	\\
	\textit{Keywords:} {Fokker-Planck equation, graphons, mean-field systems, weakly interacting particles, McKean-Vlasov, graphon particle system.}
\end{abstract}

\maketitle

\section{Introduction, model and literature}

\subsection{Introduction, aim of this note and organization}
In the last years, the study of interacting particle systems with a non-trivial dense network structure has been repeatedly addressed in the mathematical community: see, e.g., \cite{cf:BBW,cf:DGL,cf:RO,cf:CDG} for interacting diffusions,  \cite{cf:CH18,cf:CCGL19,cf:parise20} for applications in mean-field games and \cite{cf:KHT20,cf:delmas20} in the context of dynamical systems. Depending on the setting, many results on interacting particle systems are nowadays available \cite{cf:BCW20,cf:BCN20,cf:C19,cf:CM19,cf:L18} whenever the underlying graph sequence is converging, in some sense depending case by case, to a suitable object. More precisely, if the graph limit is a graphon (e.g., \cite{cf:LS06,cf:Lov}), then, as the size of the system tends to infinity, the finite-time population behavior is suitably described by an infinite system of coupled non-linear Fokker-Planck equations, the coupling between equations being made by the graphon limit itself (see equation \eqref{eq:fp} for a first example).

This note addresses the graphon particle system obtained in the limit and tries to better understand the relationship between the initial conditions and the choice of the underlying network. Such a perspective is important not only to better understand the limit system, but also in case one is interested in reconstructing the network structure by looking at the particle dynamics: we show that different graphons can lead to exactly the same particle behavior.

Graphons have been used as a model for many real-world networks, yet, to the author's knowledge, known results on graphon particle systems are mainly limited to existence and uniqueness of solutions; we refer to Sect.~ \ref{ss:literature} for the current literature.

\medskip

A labeled graphon (we use the same notation of \cite{cf:Lov}) is a symmetric measurable function $W$ defined on the unit-square
\begin{equation*}
\begin{split}
W: [0,1]^2 &\to [0,1]\\
(x,y) &\mapsto W(x,y).
\end{split}
\end{equation*}
If the limit population is represented as a continuum of particles labeled by the unit interval $[0,1]$, then $W(x,y)$ stands for the connection strength between the particle labeled with $x$ and the one labeled with $y$. The function $W$ thus describes the connection network underlying an infinite population of particles.

Fix a finite time horizon $T>0$. For one-dimensional particles which are interacting through a regular function $\Gamma$, the graphon particle system is an infinite system of partial differential equations (PDE) coupled by means of $W$, i.e., 
\begin{equation}
\label{eq:fp}
\begin{split}
\partial_t \mu_t (\theta, x) & = \frac 12 \partial^2_\theta \mu_t (\theta, x) \\
-& \partial_\theta \left[ \int_0^1 W(x,y) \mu_t(\theta, x) \int_\bbR \Gamma(\theta, \theta') \mu_t(d \theta', y) d y \right], \quad x \in [0,1],
\end{split}
\end{equation}
for $t \in [0,T]$ and where $\mu = \{ \mu_t(\cdot, x), t \in [0,T]\}_{x \in [0,1]}$ is a collection of probability measures. The initial datum is given by a probability measure $\mu_0 \in \cP(\bbR \times [0,1])$.  From the disintegration theorem (\cite[\S 5]{dudley}), we have that $\mu_0(\cdot, x) \in \cP(\bbR)$ for almost every $x \in [0,1]$. Namely, the probability measure 
\begin{equation*}
\mu(x) := \mu(\cdot, x) = \{\mu_t(\cdot, x), t \in [0,T] \} \in \cP(C([0,T], \bbR))
\end{equation*}
represents the law of the trajectory associated to the $x$-labeled particle, this last one being connected to the others by means of $W(x, \cdot) : [0,1] \to [0,1]$, see the following \eqref{eq:family}. Each trajectory is a continuous function from $[0,T]$ to $\bbR$, i.e., an element of the space $C([0,T],\bbR)$.

To the system of non-linear Fokker-Planck equations \eqref{eq:fp}, it is associated a family of continuous processes $\{\theta^x\}_{x \in [0,1]} \subset C([0,T], \bbR)$, which formally solve
\begin{equation}
\begin{cases}
\label{eq:family}
\theta^x_t = \theta^x_0 + \int_0^t \int_0^1 W(x,y) \int_\bbR \Gamma(\theta^x_s, \theta) \mu_s(d \theta, y) d y \, d s + B^x_t, \quad t \in [0,T], \\
\mu_t(\cdot, y) = \cL(\theta^y_t), \quad \text{ for } t \in [0,T] \text{ and } y \in [0,1],
\end{cases}
\end{equation}
where the law of the initial condition $\theta^x_0$, denoted by $\cL(\theta^x_0)$, is thus given by $\mu_0(x)$. The family $\{ B^x\}_{x\in [0,1]}$ is composed of independent and identically distributed (IID) Brownian motions, independent of the initial conditions as well.

The link between \eqref{eq:fp} and \eqref{eq:family} is given by the fact that $\mu_t(x) = \cL(\theta^x_t)$ for every $t \in [0,T]$ and $x \in [0,1]$. We refer to Proposition \ref{p:existenceUniqueness} for a precise statement which includes more general systems, including the degenerate case ($\sigma \equiv 0$).

\medskip

System \eqref{eq:fp} or, equivalently, the family of non-linear processes \eqref{eq:family}, have been proposed as a limit description in the literature \cite{cf:L18,cf:RO,cf:CH18,cf:BCW20,cf:BCN20}, yet very little is known on their mathematical properties. We thus aim at mathematically addressing \eqref{eq:fp}, firstly by showing the strong link with the graphon theory (especially including unlabeled graphons, see Proposition \ref{p:unlabeled}), secondly by proving that a simpler representation for both \eqref{eq:fp} and \eqref{eq:family} exists, whenever the initial conditions and the underlying graphon satisfy a suitable condition, see Theorem \ref{thm} and, in particular, Corollary \ref{cor:representation} and Proposition \ref{p:stepgraphon}.

\medskip

The note is organized as follows: the next subsection presents the general model associated to \eqref{eq:family}, together with known results from the existing literature. Related works are discussed at the end of this first section.

Sect.~\ref{s:thm} provides the main result, Theorem \ref{thm}, as well as two corollaries and a relevant application to step kernels, see Proposition \ref{p:stepgraphon}. Notably,  Corollary \ref{cor:mean_field} addresses the classical mean-field scenario showing that, if the graphon has constant degree, i.e., such that $d(\cdot) = \int_0^1 W(\cdot,y)d y$ is constant, then the dynamics is mean-field.

Sect.~\ref{s:proof} contains the mathematical set-up and the proof of the main result.

\subsection{The model and some known results}
We consider a class of models slightly more general than \eqref{eq:family}: fix $\mu_0 \in \cP(\bbR \times [0,1])$ and let $\{\theta^x\}_{x \in [0,1]}$ be the family solving the $\infty$-dimensional coupled system:
\begin{equation}
\begin{cases}
\label{eq:theta}
\theta^x_t = \theta^x_0 + \int_0^t F(\theta^x_s) d s + \int_0^t \int_0^1 W(x,y) \int_\bbR \Gamma(\theta^x_s, \theta) \mu_s(d \theta, y) d y \, d s + \int_0^t \sigma(\theta^x_s) d B^x_s \\
\mu_t(\cdot, y) = \cL(\theta^y_t), \quad \text{ for } t \in [0,T] \text{ and } y \in [0,1],
\end{cases}
\end{equation}
where $F$, $\Gamma$ and $\sigma$ are 1-Lipschitz functions bounded by 1, and $\{ B^x \}_{x \in [0,1]}$ is a family of IID Brownian motions on $\bbR$.  For every $x \in [0,1]$, the initial condition $\theta^x_0$ is a random variable with law given by $\cL(\theta^x_0) = \mu_0(x)$, independent of the other initial conditions and of the Brownian motions. The probability measure induced by the initial conditions and the family of Brownian motions is denoted by $\bP$ and the corresponding expectation by $\bE$.

\medskip

We work under the following assumptions.

\begin{hypothesis}
	\label{hyp}
	We assume that  $\bE \left[ \vert \theta^x_0(0) \vert^2 \right] < \infty$
	\begin{enumerate}
		\item (measurability) The map $[0,1] \ni x \mapsto \mu_0(x) \in \cP(\bbR)$ is measurable;
		\item (moment condition) $\bE [ \vert  \theta^x_0 (0) \vert ^2] < \infty$ for every $x \in [0,1]$.
	\end{enumerate}
\end{hypothesis}

\begin{rem}
	\label{rem:hypothesis}
	Measurability with respect to $x$ is necessary in order to work with graphons and to integrate the marginal laws $\mu(x)$ with respect to $x\in[0,1]$.
	
	The finite second moment condition is related to the use of the 2-Wasserstein distance between probability measures (see the Sect.~\ref{s:proof} and, in particular, \eqref{d:wass}). The choice $p=2$ is purely arbitrary, it is obviously possible to work with initial conditions with finite $p$-moment for every $p\geq 1$.
\end{rem}

We stick to the one-dimensional setting, i.e., $\theta^x$ taking values in $\bbR$; however, all the proofs presented below are easily extendable to any finite dimension.

\medskip

Existence and uniqueness for system \eqref{eq:theta} are known.

\begin{proposition}
	\label{p:existenceUniqueness}
	Under the measurability assumption and the moment condition in Hypothesis \ref{hyp}, there exists a unique pathwise solution to system \eqref{eq:theta}.
	
	Moreover, if we denote by $\mu(x)$ the law of $\theta^x$ for each $x \in [0,1]$, then the map $[0,1] \ni x \mapsto \mu(x) \in \cP(C([0,T], \bbR))$ is measurable and $\mu(x)$ weakly solves 
	\begin{equation}
	\label{eq:fpBIG}
	\begin{split}
	\partial_t \mu_t (\theta, x) \, &=\,  \frac  12 \partial^2_{\theta }\left({\sigma^2(\theta)} \mu_t (\theta, x)\right) - \partial_{\theta} \left( \mu_t (\theta, x) F(\theta) \right) \\
	-& \partial_\theta \left[ \int_0^1 W(x,y) \mu_t (\theta, x) \, \int_{\bbR} \Gamma(\theta, \theta') \mu_t (d \theta', y) d y \right] ,
	\end{split}
	\end{equation}	
	for every $x \in [0,1]$.
\end{proposition}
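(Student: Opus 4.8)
The plan is to establish Proposition~\ref{p:existenceUniqueness} by a fixed-point argument on the space of measurable families of path laws, equipped with a suitably weighted $2$-Wasserstein distance over $C([0,T],\R)$. First I would fix a candidate family $\nu = \{\nu(x)\}_{x\in[0,1]}$ with $\nu(x) \in \cP(C([0,T],\R))$, measurable in $x$, and with uniformly controlled second moments (using the moment condition of Hypothesis~\ref{hyp}); for each $x$ I would plug the time-marginals $\nu_t(\cdot,y)$ into the drift of \eqref{eq:theta} and solve the resulting \emph{decoupled} SDE
\begin{equation*}
\theta^x_t = \theta^x_0 + \int_0^t F(\theta^x_s)\,\dd s + \int_0^t \int_0^1 W(x,y)\int_\R \Gamma(\theta^x_s,\theta)\,\nu_s(\dd\theta,y)\,\dd y\,\dd s + \int_0^t \sigma(\theta^x_s)\,\dd B^x_s,
\end{equation*}
which is a classical SDE with globally Lipschitz, bounded coefficients (the interaction term is $1$-Lipschitz in $\theta^x_s$ uniformly in $x$, since $\Gamma$ is $1$-Lipschitz and $W \le 1$, and it is bounded because $\|\Gamma\|_\infty \le 1$ and $\int_0^1 W(x,y)\,\dd y \le 1$). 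Standard SDE theory gives a unique strong solution $\theta^x$; define $\Phi(\nu)(x) := \cL(\theta^x)$. The measurability of $x \mapsto \Phi(\nu)(x)$ follows from measurability of $x \mapsto W(x,\cdot)$ and of $x \mapsto \mu_0(x)$ together with continuous dependence of the SDE solution on its coefficients and initial law — this is the point alluded to in Remark~\ref{rem:hypothesis} around relation \eqref{d:rel}.

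Next I would show $\Phi$ is a contraction (after possibly iterating, or working on a short time interval and concatenating). Given two families $\nu,\tilde\nu$, couple the corresponding solutions $\theta^x,\tilde\theta^x$ with the same Brownian motion and same initial condition; by the Lipschitz bounds, for each fixed $x$,
\begin{equation*}
\bE\Big[\sup_{s\le t}|\theta^x_s - \tilde\theta^x_s|^2\Big] \leq C\int_0^t \bE\Big[\sup_{r\le s}|\theta^x_r - \tilde\theta^x_r|^2\Big]\,\dd s + C\int_0^t \Big(\int_0^1 W(x,y)^2\, \cW_2\big(\nu_s(\cdot,y),\tilde\nu_s(\cdot,y)\big)^2\,\dd y\Big)\,\dd s,
\end{equation*}
using that $W(x,y) \in [0,1]$, Cauchy--Schwarz in $y$, and the $1$-Lipschitz property of $\Gamma$ to bound the difference of the two interaction integrals by a $2$-Wasserstein distance between the marginals. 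Grönwall's inequality then controls $\sup_x \bE[\sup_{s\le t}|\theta^x_s-\tilde\theta^x_s|^2]$ by $T e^{CT}$ times $\sup_x \int_0^T \cW_2(\nu_s(\cdot,x),\tilde\nu_s(\cdot,x))^2\,\dd s$, and since $\cW_2(\Phi(\nu)(x),\Phi(\tilde\nu)(x))^2 \le \bE[\sup_{s\le T}|\theta^x_s - \tilde\theta^x_s|^2]$, taking $T$ small (or iterating the map, which on $[0,T]$ yields a factor $(CT)^n/n!$) makes $\Phi$ a strict contraction in the appropriate metric $\sup_x \cW_2^{\,2}$ or its $L^2$-in-$x$ analogue. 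The unique fixed point gives the unique pathwise solution to \eqref{eq:theta}; pathwise uniqueness among \emph{all} solutions (not just those in the contraction space) follows from the same Grönwall estimate applied directly to two solutions of \eqref{eq:theta}.

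Finally, the PDE statement \eqref{eq:fpBIG} follows from Itô's formula: for $\varphi \in C_c^\infty(\R)$, applying Itô to $\varphi(\theta^x_t)$ and taking expectations under $\bP$ yields, for each fixed $x$, the weak formulation
\begin{equation*}
\int_\R \varphi\,\dd\mu_t(\cdot,x) - \int_\R \varphi\,\dd\mu_0(x) = \int_0^t \int_\R \Big(\tfrac12\sigma^2(\theta)\varphi''(\theta) + F(\theta)\varphi'(\theta) + \varphi'(\theta)\int_0^1 W(x,y)\int_\R \Gamma(\theta,\theta')\mu_s(\dd\theta',y)\,\dd y\Big)\mu_s(\dd\theta,x)\,\dd s,
\end{equation*}
which is precisely the distributional form of \eqref{eq:fpBIG}. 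I expect the main obstacle to be the measurability claim: one must verify that $x \mapsto \cL(\theta^x) \in \cP(C([0,T],\R))$ is measurable, which requires a joint-measurability argument — e.g., realizing all the SDEs on a common probability space with $x$-measurable coefficients (via measurability of $W$ and of $x\mapsto\mu_0(x)$, plus measurable selection of initial conditions) and invoking measurable dependence of strong solutions on parameters — whereas the existence, uniqueness, and Fokker--Planck parts are essentially routine Lipschitz SDE theory plus Itô calculus.
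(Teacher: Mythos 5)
Your proposal is correct and follows essentially the same route as the paper, which does not give its own argument but defers to the standard proofs in the cited references (a Picard/fixed-point iteration in a Wasserstein metric on measurable families of path laws, a Gr\"onwall contraction estimate using the Lipschitz and boundedness assumptions on $F$, $\Gamma$, $\sigma$ and $W\le 1$, and It\^o's formula for the weak formulation of \eqref{eq:fpBIG}). You also correctly single out the measurability in $x$ as the only non-routine point, consistent with the paper's Remark \ref{rem:hypothesis}.
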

We refer to \cite[Proposition 2.1]{cf:BCW20} and \cite[Proposition 2.4]{cf:L18} for two standard similar proofs.  We point out that $W$ needs only to be bounded and not necessarily with values in $[0,1]$, i.e, existence and uniqueness hold for every kernel $W$, as in the notation below.

\medskip

Let $\cW := \{W:[0,1]^2 \to \bbR \text{ bounded, symmetric and measurable}\}$ be the space of kernels\footnote{we always consider two kernels to be equal if and only if they differ on a subset of Lebesgue measure zero. We follow closely the notation in \cite{cf:Lov}.}. The cut-norm of $W\in\cW$ is defined as
\begin{equation}
\label{d:cut-norm}
\norm{W}_{\square} := \max_{S,T \subset [0,1]} \left\vert  \int_{S\times T} W(x,y) d x d y \right\vert 
\end{equation}
where the maximum is taken over all measurable subsets $S$ and $T$ of $I$.  Let $\cW_0 := \{W \in \cW : 0 \leq W \leq 1 \}$ be the space of labeled graphons: for $W,\, V \in \cW_0$ their cut-distance is defined by
\begin{equation}
\label{d:cut-distance}
\delta_\square (W,V) := \, \min_{\varphi \in S_{[0,1]}} \, \norm{W - V^\varphi}_\square,
\end{equation}
where the minimum ranges over $S_{[0,1]}$ the space of invertible measure preserving maps from $[0,1]$ into itself and where $V^\varphi (x,y) := V(\varphi(x),\varphi(y))$ for $x,y \in [0,1]$.

The cut-distance $\delta_\square$ is a pseudometric on $\cW_0$ since it can be zero between two different labeled graphons. If we identify all labeled graphons with cut-distance zero, we obtain the space of graphons $\tilde{\cW}_0 := \cW_0 / \delta_\square$. A well-known result of graph limits theory says that $(\tilde{\cW}_0, \delta_\square)$ is a compact metric space \cite[Theorem 9.23]{cf:Lov}.

Proposition \ref{p:existenceUniqueness} can thus be restated by saying that for every kernel $W \in \cW$, there exists a unique solution $\mu^W$ to \eqref{eq:fpBIG}. It is thus natural to ask whether the application $W \mapsto \mu^W$ is continuous (e.g., in the topology of the weak-convergence) with respect to $\norm{\cdot}_\square$. This point is discussed in the next remark.

\begin{rem}
	\label{rem:continuity}
	Under suitable assumptions on the coefficients, see \cite[Proposition 3.3]{cf:BCN20} but also \cite[Theorem 2.1]{cf:BCW20} and \cite[Lemma 2.7]{cf:KM17}, it is possible to show that for two solutions $\mu^W$ and $\mu^V$ associated to $W$ and $V$ in $\cW$ respectively, it holds that
	\begin{equation}
	\label{eq:continuity}
	D^2_T (\mu^W, \mu^V) \leq C \norm{ W-V}_\square, \quad \text{ for some } C>0,
	\end{equation}
	where $D_T$ is some distance on probability measures metricizing the weak-convergence. Equation \eqref{eq:continuity} establishes that the application $W \mapsto \mu^W$ is (Hölder-)continuous and, in particular, that similar graphons \textendash{} in the cut-norm \textendash{} lead to similar particle behaviors.
	
	Observe that it is not possible to infer that different graphons (possibly in $\delta_{\square}$-distance) lead to different behaviors. As Theorem \ref{thm} and its corollaries show, this cannot be true. There are relevant examples where it is possible to prove that on substantially different graphons, the particle dynamics can coincide.
\end{rem}

For every $W \in \cW_0$, Proposition \ref{p:existenceUniqueness} shows that the infinite system \eqref{eq:fpBIG} admits a unique solution, yet it does not address the unlabeled class of $W$ in $\tilde{\cW}_0$. We explicit such relation in the next proposition.

\begin{proposition}
	\label{p:unlabeled}
	Let $U$ be a uniform random variable on $[0,1]$ and $\theta^U$ the non-linear process in $\{ \theta^x \}_{x \in [0,1]}$ with random label $U$. Then, the law of $\theta^U$ is given by $\tilde{\mu} = \int_0^1 \mu(x) d x$ and it is independent of the class of $W$ in $\tilde{\cW}_0$.
\end{proposition}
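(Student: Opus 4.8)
The plan is to treat the two assertions separately and to lean on the uniqueness part of Proposition \ref{p:existenceUniqueness}. For the identity $\cL(\theta^U)=\tilde\mu:=\int_0^1\mu(x)\,\dd x$, I would condition on the label. Since the uniform variable $U$ is independent of the whole family $\{\theta^x\}_{x\in[0,1]}$ (equivalently, of the initial data and of the Brownian motions), a regular version of the conditional law of $\theta^U$ given $U=x$ is $\cL(\theta^x)=\mu(x)$, so that for every bounded measurable $f:C([0,T],\R)\to\R$
\begin{equation*}
\bE\big[f(\theta^U)\big]=\int_0^1\Big(\int f\,\dd\mu(x)\Big)\dd x ,
\end{equation*}
the inner integral being measurable in $x$ thanks to the measurability of $x\mapsto\mu(x)$ from Proposition \ref{p:existenceUniqueness}; the right-hand side is exactly $\int f\,\dd\tilde\mu$ for the probability measure $\tilde\mu(A):=\int_0^1\mu(x)(A)\,\dd x$ (well defined by Tonelli), which gives $\cL(\theta^U)=\tilde\mu$.

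For the second assertion I would first show invariance of $\tilde\mu$ under a measure preserving relabeling $\varphi\in S_{[0,1]}$ applied \emph{simultaneously} to the kernel and to the initial datum, i.e.\ under replacing $(W,\mu_0)$ by $(W^\varphi,\mu_0^\varphi)$ with $\mu_0^\varphi(x):=\mu_0(\varphi(x))$ (when $\mu_0(x)$ does not depend on $x$ the latter is just $\mu_0$, which is the situation of Corollary \ref{cor:mean_field}). Starting from the solution $\{\theta^x\}$ of \eqref{eq:theta} for the data $(W,\mu_0)$, I would set $\hat\theta^x:=\theta^{\varphi(x)}$ and $\hat\mu_t(\cdot,y):=\mu_t(\cdot,\varphi(y))$, insert these into the equation satisfied by $\theta^{\varphi(x)}$, and carry out the change of variables $y=\varphi(z)$ in the interaction integral. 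This substitution is legitimate because $\varphi$ preserves Lebesgue measure and, by the measurability of $y\mapsto\mu_0(y)$ (Hypothesis \ref{hyp}) and of $y\mapsto\mu_s(\cdot,y)$ (Proposition \ref{p:existenceUniqueness}) together with Fubini--Tonelli, the inner integrals make sense; it turns the equation into precisely system \eqref{eq:theta} for $(W^\varphi,\mu_0^\varphi)$, driven by $\{B^{\varphi(x)}\}_x$ and with initial laws $\{\mu_0(\varphi(x))\}_x$, supplemented by the consistency relation $\hat\mu_t(\cdot,y)=\cL(\hat\theta^y_t)$. Uniqueness (Proposition \ref{p:existenceUniqueness}) then forces $\mu^{W^\varphi}(x)=\cL(\hat\theta^x)=\mu(\varphi(x))$, and since $\varphi$ is measure preserving
\begin{equation*}
\int_0^1\mu^{W^\varphi}(x)\,\dd x=\int_0^1\mu(\varphi(x))\,\dd x=\int_0^1\mu(y)\,\dd y=\tilde\mu .
\end{equation*}

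To upgrade this to invariance along the whole $\tilde{\cW}_0$-class I would invoke the characterization of weak isomorphism: $\delta_\square(W,V)=0$ holds if and only if $W$ and $V$ become equal, after composition with suitable measure preserving maps, to a common kernel (Borgs--Chayes--Lov\'asz; see also \cite[Ch.~8--10]{cf:Lov}). The computation of the previous paragraph applies verbatim to an arbitrary measure preserving map in place of $\varphi\in S_{[0,1]}$: the only facts used are the change-of-variables identity $\int_0^1 g(\varphi(z))\,\dd z=\int_0^1 g(y)\,\dd y$, valid for any measure preserving $\varphi$, and uniqueness of the marginal laws $\mu(x)$, which does not depend on the joint law of the driving noises (one may fix the measures $\{\mu_s(\cdot,y)\}$ first and solve the resulting decoupled McKean--Vlasov SDEs, so the correlations among the $B^{\varphi(x)}$ created by a non-injective $\varphi$ play no role). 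Hence $\tilde\mu$ depends on $W$ only through its class in $\tilde{\cW}_0$, with the initial datum transported along. The step I expect to be the main obstacle is exactly this last one: justifying the substitution rigorously at the level of the measure-valued drift, and transferring the uniqueness statement of Proposition \ref{p:existenceUniqueness} from IID families of Brownian motions to the non-IID families produced by non-invertible measure preserving maps; the rest is bookkeeping.
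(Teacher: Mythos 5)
Your proposal is correct and follows the same basic skeleton as the paper's proof (relabel by a measure preserving map, identify the relabeled family as the solution for $(W^\varphi,\mu_0^\varphi)$ by uniqueness, then use measure preservation to conclude $\int_0^1\mu(\varphi(x))\,\dd x=\int_0^1\mu(x)\,\dd x$), but you execute it at the SDE level \eqref{eq:theta} whereas the paper works directly with the Fokker--Planck system \eqref{eq:fpBIG}. The PDE route buys the author one simplification you had to argue for by hand: system \eqref{eq:fpBIG} makes no reference to the driving noises, so the question of whether $\{B^{\varphi(x)}\}_x$ is still an admissible (IID) driving family for a non-injective $\varphi$ never arises; your observation that the marginals $\mu(x)$ are insensitive to the joint law of the noises (freeze the measure flow and solve decoupled McKean--Vlasov equations) is the correct way to dispose of this at the SDE level. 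Conversely, your write-up is more complete than the paper's on two points it leaves implicit: the identification $\cL(\theta^U)=\tilde\mu$ via conditioning on the independent label $U$ (the paper does not prove this at all), and the passage from invariance under a single composition $W\mapsto W^\varphi$ to invariance over the whole $\delta_\square$-class, which genuinely requires the Borgs--Chayes--Lov\'asz weak isomorphism theorem since $\delta_\square(W,V)=0$ only yields $W^\varphi=V^\psi$ a.e.\ for possibly non-invertible measure preserving $\varphi,\psi$, not $W=V^\varphi$ for an invertible one. Your reading that the initial datum must be transported along with the kernel (so that the claimed invariance is really about the pair $(W,\mu_0)$ up to simultaneous relabeling) matches what the paper's proof actually establishes.
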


\begin{proof}
	This result was firstly stated in \cite[Proposition 2.1]{cf:BCN20} in the case of interacting oscillators. In case the particles are living in $\bbR$ the proof does not change. Indeed, let $\varphi$ be a measure preserving map from $[0,1]$ to itself, we observe that $\mu(\varphi(x))$ solves:
	\begin{equation}
	\label{eq:fpTILDE}
	\begin{split}
	\partial_t \mu_t (\theta, \varphi(x)) \, &=\,  \frac  12 \partial^2_{\theta }\left({\sigma^2(\theta)} \mu_t (\theta, \varphi(x))\right) - \partial_{\theta} \left( \mu_t (\theta, \varphi(x)) F(\theta) \right) \\
	-& \partial_\theta \left[ \int_0^1 W(\varphi(x),y) \mu_t (\theta, \varphi(x)) \, \int_{\bbR} \Gamma(\theta, \theta') \mu_t (d \theta', y) d y \right] ,
	\end{split}
	\end{equation}
	where the last term is equal to
	\begin{equation*}
	\partial_\theta \left[ \int_0^1 V(x,y) \mu_t (\theta, \varphi(x)) \, \int_{\bbR} \Gamma(\theta, \theta') \mu_t (d \theta', \varphi(y)) d y \right]
	\end{equation*}
	with $V(x,y) = W(\varphi(x), \varphi(y))$. Thus $\{\mu(\varphi(x)) \}_{x \in [0,1]}$ solves the same system of $\{\nu(x)\}_{x \in [0,1]}$ solution to \eqref{eq:fpBIG} with $V \in \cW_0$ and initial condition $\{\mu_0(\varphi(x))\}_{x \in [0,1]}$. Clearly $\tilde{\mu} = \int_0^1 \mu(x) d x = \int_0^1 \mu(\varphi(x)) d x = \int_0^1 \nu(x) d x =  \tilde{\nu}$.
\end{proof}

Proposition \ref{p:unlabeled} makes clear that, as for a graphon the relevant information does not depend on the chosen labeling, the same holds true for the marginal law of the graphon particle system: it does not change under relabeling of the particles. With the exception of \cite{cf:BCN20} for interacting oscillators, this result has never been stated in the literature.

\begin{rem}
	\label{rem:initialConditions}
	Observe that two different initial conditions $\mu^1_0, \mu^2_0 \in \cP(\bbR \times [0,1])$ but such that $ \int_0^1 \mu^1_0 (x) d x = \int_0^1 \mu^2_0(x) d x$, lead to two different solutions $\mu^1$ and $\mu^2$ to equation \eqref{eq:fpBIG}. Following the notation of Proposition \ref{p:unlabeled}, the corresponding marginals $\tilde{\mu}^i = \int_0^1 \mu^i(x) d x$, for $i=1,2$, are in general not equal.
	
	In particular, by choosing $\mu^2_0 \equiv  \int_0^1 \mu^1_0 (x) d x$, one can derive the following statement: the behavior of a uniform random particle can be described by two different solutions, i.e., $\tilde{\mu}^2$ in case only the marginal law of the initial conditions is known, or $\tilde{\mu}^1$ in case the joint law is known.
	
	In conclusion, one cannot interchange expectation related to $U$ and the dynamics semigroup. 	The way in which the dynamics mixes the initial conditions is an interesting challenge for further studies.
\end{rem}

\subsection{Related works}
\label{ss:literature}
The graphon framework \cite{cf:LS06,cf:Lov} has been introduced in the theory of particle systems \cite{cf:CM19,cf:LS14,cf:medvedev,cf:RO} as an useful ingredient to construct inhomogeneous random graph sequences with nice statistical properties (edge independence, graph homomorphism, etc.).  Most of the known literature on particle systems has been focused on the converging properties of particle systems on such random graph sequences, see, e.g., \cite{cf:BCW20,cf:BCN20,cf:L18,cf:RO}. They show that some of the classical mean-field arguments (as propagation of chaos \cite{cf:szni}) can be extended to deal with random graphs and to include labeled graphons in the limit description. Nevertheless, only little attention has been put into the study of the limit object \eqref{eq:fpBIG} which remains, to the author's knowledge, rather unknown.

The recent work \cite{cf:BCN20} shows a direct connection between particle systems and the graphon theory: under suitable hypothesis on \eqref{eq:family}, there exists a Hölder-continuous mapping between the space of graphons $(\tilde{\cW}_0, \delta_\square)$ and $\tilde{\mu}$ as in Proposition \ref{p:unlabeled}. This allows to consider general graph sequences, as exchangeable random graphs \cite{cf:DJ08}, which leads to a (possibly) random graphon $W$ in the limit. However, despite the rather understood convergence properties, only a few insights are available on the limit particle system, we refer to the examples in Sect.~2.3 of \cite{cf:BCN20}.

Although interesting by itself, the study of the limit Fokker-Planck equation does not necessarily provide a suitable description of particle systems on diverging time scales, as already raised in \cite{cf:CDG,cf:C19} for the mean-field case. With the exception of dissipative dynamics \cite{cf:BW20}, a substantial understanding of the phase space of \eqref{eq:fpBIG} is needed to study the long-time behavior of finite particle systems on graphs as shown in \cite{cf:C19}. It is thus important to better understand system \eqref{eq:fpBIG} in order to address the finite particle system counterpart.

%
%

\begin{figure}[h]
	\centering
	\begin{subfigure}[b]{0.32\textwidth}
		\centering \includegraphics[scale=0.65]{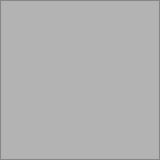}
		\caption{\label{fig:mean_field}Constant graphon}
	\end{subfigure}
	\begin{subfigure}[b]{0.32\textwidth}
		\centering \includegraphics[scale=0.65]{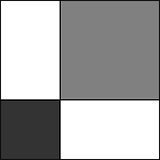}
		\caption{Disconnected graphon}\label{fig:disconnected}
	\end{subfigure}
	\begin{subfigure}[b]{0.32\textwidth}
		\centering \includegraphics[scale=0.65]{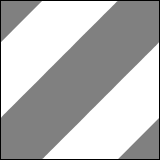}
		\caption{Cayley graphon}\label{fig:cayley}
	\end{subfigure}
	\caption{\label{fig:graphons}Examples of graphons that satisfy \eqref{Hmeanfield} (constant degree). Observe that the graphon in (B) is composed of two connected components, the smaller one being more densely connected (darker) with respect to the other one so to satisfy \eqref{Hmeanfield}.
		Finally, observe that (A), (B) and (C) are different both as labeled and unlabeled graphons, i.e., different in $\delta_\square$-distance.}
\end{figure}

\section{Main result, examples and applications}
\label{s:thm}
%

\subsection{Main result}
Let the initial condition $\mu_0 \in \cP(\bbR \times [0,1])$ and the labeled graphon $W \in \cW_0$ be fixed. Before stating the main result, we give two definitions.

For $x \in [0,1]$, let  $d(x) = \int_0^1 W(x,y) d y$ be the degree of $x$ in $W$. More generally, define the degree of $x$ with respect to a (measurable) subset $A\subset[0,1]$ by
\begin{equation}
d_A(x) = \int_A W(x,y) d y.
\end{equation}

Let $\mathcal{J}$ be a partition of $[0,1]$ indexed by $J$. For $\bar{z} \in [0,1]$, we denote by $[\bar{z}]$ the corresponding cell in $\mathcal{J}$, i.e., the unique $A \in \mathcal{J}$ such that $\bar{z} \in A$. We say that $\mathcal{J} = \{A_{\bar{z}} \}_{\bar{z} \in J}$ is \textit{admissible} if every $A \in \mathcal{J}$ is a measurable set of $[0,1]$. Without loss of generality, we suppose that $J \subset [0,1]$ and that for every $\bar{z} \in J$, $\bar{z} \in A_{\bar{z}}$, i.e., $[\bar{z}] = A_{\bar{z}}$.

\begin{theorem}
	\label{thm}
	Assume Hypothesis \ref{hyp}. Suppose that there exists an admissible partition $\mathcal{J} = \{[\bar{z}]\}_{\bar{z} \in J}$ of $[0,1]$ such that, for every $\bar{x}$ and $\bar{y}$ in $J$ it holds that
	\begin{equation}
	\label{H}
	d_{[\bar{y}]}(x) = \int_{[\bar{y}]} W(x,y) d y = \int_{[\bar{y}]} W(\bar{x}, y) d y = d_{[\bar{y}]}(\bar{x}) \quad \text{ for all } x \in [\bar{x}].
	\end{equation}
	Further assume that $\mu_0$ is constant on every $[\bar{z}] \in \mathcal{J}$.
	Then, $\mu$ solution to \eqref{eq:fpBIG} is constant on every $[\bar{z}] \in \mathcal{J}$, i.e., $\mu(x) = \mu(\bar{x})$, for all $x \in [\bar{x}]$ and every $\bar{x} \in J$.
\end{theorem}

Condition \eqref{H} is requiring that, if we partition the interval $[0,1]$ in $J$ subsets $\{[\bar{x}]\}_{\bar{x} \in J}$, then the density of neighbors with respect to any of these subsets, i.e., $d_{[\bar{y}]} (\cdot)$ for some $\bar{y} \in J$, is piecewise constant on $[0,1]$ and completely characterized by the values on $J$. See Figure \ref{fig:graphons} (B) and  Figure \ref{fig:stepgraphon} (A) for graphons that satisfy \eqref{H}.

Theorem \ref{thm} basically states that, if one can group the particles so that within each group they have the same initial condition (in law) and are \textit{equi-connected} to the other groups in the sense of \eqref{H}, then the solution $\mu$ to \eqref{eq:fpBIG} is constant on these groups. Observe that Theorem \ref{thm} does not depend on the representative of the class of $W \in \tilde{\cW_0}$. As a consequence, one can relabel the particles such that $\mu$ is piecewise constant as a function of $x \in [0,1]$.


\medskip

A direct consequence of Theorem \ref{thm} is that the solution to \eqref{eq:fpBIG} can be obtained solving a system of coupled differential equations indexed by $J$ instead of $[0,1]$. When $|J|< \infty$, this implies that \eqref{eq:fpBIG} is equivalent to a finite  system of differential equations.

\begin{cor}
	\label{cor:representation}
	Under the assumptions of Theorem \ref{thm}. The infinite system \eqref{eq:fpBIG} is suitably described by the system of coupled partial differential equations
	\begin{equation}
	\label{eq:fpREDUCED}
	\begin{split}
	\partial_t \nu_t (\theta, \bar{x}) \, &=\,  \frac  12 \partial^2_{\theta }\left({\sigma^2(\theta)} \nu_t (\theta, \bar{x})\right) - \partial_{\theta} \left( \nu_t (\theta, \bar{x}) F(\theta) \right) \\
	-& \partial_\theta \left[ \int_J W(\bar{x},\bar{y}) \nu_t (\theta, \bar{x}) \, \int_{\bbR} \Gamma(\theta, \theta') \nu_t (d \theta', \bar{y}) d \bar{y} \right], \quad \bar{x}\in J,
	\end{split}
	\end{equation}
	with $\nu_0 = \mu_0$. Notably, $\mu(x) = \nu(\bar{x})$ for every $\bar{x} \in J$ and $x \in [\bar{x}]$.
\end{cor}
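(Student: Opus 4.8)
The plan is to push Theorem~\ref{thm} through the equation. By Theorem~\ref{thm} the map $x\mapsto\mu(x)$ is constant on every orbit, so it factors through the quotient map $q:[0,1]\to J$, $x\mapsto\bar x$: there is a map $\nu$ with $\mu(x)=\nu(q(x))$ for all $x$, and this is the family $\{\nu(\bar x)\}_{\bar x\in J}$ of the statement; measurability of $\bar x\mapsto\nu(\bar x)$ for the quotient $\sigma$-algebra is inherited from that of $x\mapsto\mu(x)$ in Proposition~\ref{p:existenceUniqueness}. Likewise, by the very definition \eqref{d:rel} of $\sim$ the map $x\mapsto\mu_0(x)$ is constant on orbits, hence factors as $\mu_0=\nu_0\circ q$ for a measurable $\nu_0:J\to\cP(\R)$; this is the meaning of ``$\nu_0=\mu_0$''.

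Next I would make the objects in \eqref{eq:fpREDUCED} precise. Equip $J$ with the push-forward measure $\pi:=q_{\#}\mathrm{Leb}$, so that $\int_J\dd\bar y$ is integration against $\pi$ and $\int_0^1 h(y)\,\dd y=\int_J h\,\dd\pi$ for every $h$ constant on orbits. The kernel written ``$W(\bar x,\bar y)$'' in \eqref{eq:fpREDUCED} is the block average
\begin{equation*}
\Kbar(\bar x,\bar y):=\frac{1}{|[\bar y]|}\int_{[\bar y]}W(\bar x,y)\,\dd y
\end{equation*}
(for orbits of positive measure; on the non-atomic part of $\pi$ one replaces this by the disintegration of $\mathrm{Leb}$ along $q$). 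The role of hypothesis \eqref{H} is precisely that the block average $\tfrac{1}{|[\bar y]|}\int_{[\bar y]}W(x,y)\,\dd y$, a priori a function of $(x,\bar y)$, depends on $x$ only through its orbit, so that $\Kbar$ descends to a genuine kernel on $J\times J$; one then checks that $\Kbar$ is symmetric --- integrate \eqref{H} over $x\in[\bar x]$ and use Fubini and the symmetry of $W$ --- measurable, and bounded by $1$. Disintegration along $q$ now yields, for every $x$ and every $h$,
\begin{equation*}
\int_0^1 W(x,y)\,h(q(y))\,\dd y=\int_J \Kbar(q(x),\bar y)\,h(\bar y)\,\dd\pi(\bar y).
\end{equation*}

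Now I would substitute. For $x\in[\bar x]$ one has $\mu_s(\cdot,y)=\nu_s(\cdot,q(y))$ for all $y$, so applying the last identity with $h(\bar y)=\int_\R\Gamma(\theta,\theta')\nu_t(\dd\theta',\bar y)$ turns the interaction term of \eqref{eq:fpBIG}, equivalently of \eqref{eq:theta}, at $x$ into the interaction term of \eqref{eq:fpREDUCED} at $\bar x$, while the $\sigma$- and $F$-terms depend on $x$ only through $\mu_t(\theta,x)=\nu_t(\theta,\bar x)$; hence $\nu$ weakly solves \eqref{eq:fpREDUCED}. Conversely, lifting any weak solution of \eqref{eq:fpREDUCED} along $q$ gives, by the same computation run backwards and using $\mu_0=\nu_0\circ q$, a weak solution of \eqref{eq:fpBIG} with the original data, which by the uniqueness in Proposition~\ref{p:existenceUniqueness} must be $\mu$; so \eqref{eq:fpREDUCED} has $\nu$ as its unique solution and $\mu(x)=\nu(\bar x)$. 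For this last point one invokes Proposition~\ref{p:existenceUniqueness} with the base probability space $(J,\pi)$ in place of $([0,1],\mathrm{Leb})$ --- its proof is insensitive to the change --- after noting that its hypotheses ($\Kbar$ bounded, symmetric, measurable; $\nu_0$ measurable with finite second moments) are inherited from those on $W$ and $\mu_0$.

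I expect the main obstacle to be not Theorem~\ref{thm}, which is assumed, but the bookkeeping just sketched: choosing the correct measure $\pi$ on $J$, and above all recognising that ``$W(\bar x,\bar y)$'' in \eqref{eq:fpREDUCED} must be read as the block-averaged kernel $\Kbar$ rather than $W$ evaluated at representatives (the two agree only when $W$ is block-constant on the partition, which \eqref{H} does not force), together with checking that \eqref{H} is exactly what makes $\Kbar$ descend to a kernel on $J\times J$. A further, purely technical nuisance is the degenerate case in which orbits of positive measure do not exhaust $[0,1]$ --- e.g.\ when $\mu_0(\cdot)$ is essentially injective --- where $\pi$ has a non-atomic component, $\Kbar$ is defined only up to a $\pi$-null set, and the ``reduction'' is vacuous, though the statement remains true.
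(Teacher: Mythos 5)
Your proof is correct and follows the same route as the paper, whose entire argument consists of two sentences: existence and uniqueness for \eqref{eq:fpREDUCED} follow from Proposition \ref{p:existenceUniqueness}, and the reduced system is closed so it can be solved independently of \eqref{eq:fpBIG}. The extra care you take is genuinely needed and absent from the paper: the paper never specifies the measure $\dd \bar{y}$ on $J$ nor what ``$W(\bar{x},\bar{y})$'' means when $W$ is not block-constant, and your identification of it as the orbit-averaged kernel (symmetric by \eqref{H}, Fubini and the symmetry of $W$) is the correct reading --- evaluating $W$ at representatives would be ill-defined and would not reproduce the interaction term of \eqref{eq:fpBIG}.
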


\begin{proof}
	Existence and uniqueness for system \eqref{eq:fpREDUCED} directly follow from Proposition \ref{p:existenceUniqueness}. Moreover, system \eqref{eq:fpREDUCED} is written in closed form and can be thus solved independently of \eqref{eq:fpBIG}.
\end{proof}

Graphons have proven to be an important tool for establishing the convergence of particle systems on general graph sequences. Notably, it is possible to show that interacting particles on apriori different graph sequences have the same asymptotic behavior whenever the graph limits coincide in $\tilde{\cW_0}$, the infinite particle system \eqref{eq:theta} being indistinguishable.

We want to stress that the representation \eqref{eq:fpBIG} can be slightly misleading when comparing two systems: Theorem \ref{thm} and Corollary \ref{cor:representation} show that different unlabeled graphons satisfying \eqref{H}, lead to the same solution no matter the formal difference in the representation  \eqref{eq:fpBIG}. In other words, different graph sequences leading to different graph limits do not imply different particle behaviors. Understanding which are the key properties of graphons that fully characterize a complex system behavior is an interesting subject for future research.

\subsection{The case of constant degree and uniform initial conditions}

We turn to an interesting application when the graphon $W$ has constant degree and the law of the initial condition is constant with respect to $x\in [0,1]$. As already expressed in Remark \ref{rem:initialConditions}, assuming constant initial conditions covers the case of a uniform random particle in \eqref{eq:fpBIG} when only the marginal law of the initial conditions, i.e., $\int_0^1 \mu_0(x) d x$, is available.

\begin{cor}
	\label{cor:mean_field}
	Assume Hypothesis \ref{hyp}. Suppose that there exists $p \in [0,1]$ such that
	\begin{equation}
	\label{Hmeanfield}
	p = \int_0^1 W(x,y) d y, \quad \text{ for all } x \in [0,1],
	\end{equation}
	and that the initial is a constant function of $x$, i.e., $\mu_0(x) \equiv \mu_0$.
	
	Then, $\{\mu(x)\}_{x \in [0,1]}$ is label independent, i.e., $\mu(\cdot) \equiv \mu \in \cP(C([0,T], \bbR))$, and $\mu$ solves the classical McKean-Vlasov equation
	\begin{equation}
	\label{eq:mcv}
	\partial_t \nu_t (\theta) \, =\,  \frac  12 \partial^2_{\theta }\left({\sigma^2(\theta)} \nu_t (\theta)\right) - \partial_{\theta} \left( \nu_t (\theta) F(\theta) \right) -  \partial_\theta \left[  \nu_t (\theta) \, p \int_{\bbR} \Gamma(\theta, \theta') \nu_t (d \theta') \right],
	\end{equation}
	with initial condition $\mu_0$.
\end{cor}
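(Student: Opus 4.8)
The plan is to specialize Theorem~\ref{thm} to the degenerate partition in which $J$ is a single point. First I would note that, since $\mu_0(x)=\mu_0$ does not depend on $x$, the relation $\sim$ of \eqref{d:rel} identifies all labels, so $J=\{\bar x\}$ with orbit $[\bar x]=[0,1]$. Consequently hypothesis \eqref{H} only needs to be tested for the single pair $\bar y=\bar x$ with $[\bar y]=[0,1]$, and it becomes
\begin{equation*}
	\int_0^1 W(x,y)\,\dd y \;=\; \int_0^1 W(\bar x,y)\,\dd y \qquad\text{for all } x,\bar x\in[0,1],
\end{equation*}
i.e.\ the requirement that the degree $d(x)=\int_0^1 W(x,y)\,\dd y$ be (a.e.) constant on $[0,1]$. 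Denoting by $p$ this common value and using $0\le W\le 1$, hence $0\le d(x)\le 1$, we get $p\in[0,1]$; this is exactly \eqref{Hmeanfield}. The reverse implication \eqref{Hmeanfield}$\Rightarrow$\eqref{H} is immediate, since a constant degree trivially satisfies \eqref{H} for the one-element partition. This proves the claimed equivalence.

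Next I would apply Theorem~\ref{thm}: under \eqref{H} we have $\mu(x)=\mu(\bar x)$ for every $x\in[\bar x]=[0,1]$, so the family $\{\mu(x)\}_{x\in[0,1]}$ is label independent; write $\mu(x)\equiv\mu\in\cP(C([0,T],\R))$ and $\mu_t:=\mu_t(\cdot)$. It remains to identify the equation solved by $\mu$. Inserting $\mu_t(\cdot,y)=\mu_t$ for a.e.\ $y\in[0,1]$ into the interaction term of \eqref{eq:fpBIG} and integrating in $y$, using \eqref{Hmeanfield},
\begin{equation*}
	\int_0^1 W(x,y)\,\mu_t(\theta,x)\int_\R\Gamma(\theta,\theta')\,\mu_t(\dd\theta',y)\,\dd y
	\;=\; \left(\int_0^1 W(x,y)\,\dd y\right)\mu_t(\theta)\int_\R\Gamma(\theta,\theta')\,\mu_t(\dd\theta')
	\;=\; p\,\mu_t(\theta)\int_\R\Gamma(\theta,\theta')\,\mu_t(\dd\theta'),
\end{equation*}
so \eqref{eq:fpBIG} collapses to the McKean-Vlasov equation \eqref{eq:mcv}. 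Uniqueness is not an issue: $\mu$ coincides with the solution $\mu^W$ associated to the constant kernel $W\equiv p$, so Proposition~\ref{p:existenceUniqueness} (equivalently, the $x$-independent instance of \eqref{eq:fpREDUCED} in Corollary~\ref{cor:representation}) identifies $\mu$ as the unique solution of \eqref{eq:mcv} with initial datum $\mu_0$.

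I do not expect a genuine obstacle here; once Theorem~\ref{thm} is available the argument is bookkeeping. The only delicate point is the measure-theoretic reading of the quantifier ``for all $x$'': since kernels in $\cW$ are defined up to Lebesgue-null sets, \eqref{H} can only force $d(\cdot)$ to agree \emph{almost everywhere} with the constant $p$, and it is precisely this a.e.\ identity that is invoked when pushing $\mu_t(\cdot,y)=\mu_t$ through the $y$-integral above; the displayed computations should be read accordingly.
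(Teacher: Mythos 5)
Your proposal is correct and follows essentially the same route as the paper: observe that constant $\mu_0$ forces $|J|=1$ so that \eqref{H} reduces to the constant-degree condition \eqref{Hmeanfield}, then invoke Theorem \ref{thm} (via Corollary \ref{cor:representation}) to collapse \eqref{eq:fpBIG} to the McKean--Vlasov equation \eqref{eq:mcv}. The paper's proof is just a two-line citation of these facts, whereas you spell out the collapse of the interaction term and flag the a.e.\ reading of the degree condition, which is a reasonable (and harmless) elaboration.
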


\begin{proof}
	The hypothesis on $W$ allows to consider an admissible partition made of a single set, i.e., $|J|=1$. Corollary \ref{cor:representation} yields the result.
\end{proof}

The constant degree assumption on $W$ is satisfied by a large class of non-trivial graphons as the examples shown in Figure \ref{fig:graphons}.

\begin{rem}
	\label{rem:DGL}
	Observe that Corollary \ref{cor:mean_field} can be derived from the results in \cite{cf:DGL} combined with the ones on particle systems on graphons, e.g., \cite{cf:L18,cf:BCW20}. Indeed, in \cite{cf:DGL} it is shown that, for a particle system defined on a graph sequence and with IID initial conditions, a sufficient condition to obtain the mean-field limit \eqref{eq:mcv} is that each vertex in the (renormalized) graph sequence has the same asymptotic degree density. If one chooses such sequence to converge to a graphon with constant degree, then it satisfies both the hypothesis in \cite[Theorem 1.1]{cf:DGL} and in, e.g., \cite[Theorem 4.1]{cf:BCW20}. Combining \cite[Theorem 1.1]{cf:DGL} and \cite[Theorem 4.1]{cf:BCW20}, we obtain that the same finite particle system converges to \eqref{eq:mcv} but also to \eqref{eq:fpBIG}. As a consequence, the solutions of \eqref{eq:fpBIG} and \eqref{eq:mcv} must be the same, giving an undirect proof of Corollary \ref{cor:mean_field}.
\end{rem}

We show a representative example whenever the graphon is a step-wise constant function, as the one in Figure \ref{fig:stepgraphon}.

\begin{figure}
	\centering
	\begin{subfigure}[b]{0.49\textwidth}
		\centering \includegraphics[scale=0.5]{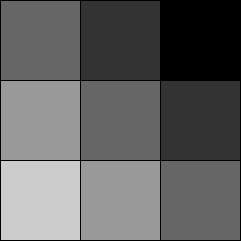}
		\caption{\label{fig:stepgraphon_scalefree}Step kernel}
	\end{subfigure}
	\begin{subfigure}[b]{0.49\textwidth}
		\centering \includegraphics[scale=0.51]{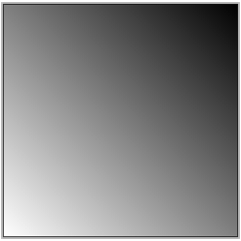}
		\caption{\label{fig:scalefree}Scale free graphon}
	\end{subfigure}
	\caption{\label{fig:stepgraphon}Suppose that $J=\{x_0, x_1, x_2\}$ and that $[x_0] =[0,1/3)$, $[x_1]=[1/3,2/3)$ and $[x_2]=[2/3, 1]$. Then, the step kernel in the figure (A) satisfies \eqref{H}. It also represents a way to approximate the scale-free graphon (B). The grayscale stands for different values in $[0,1]$.}
\end{figure}

\subsection{Finite approximation through step-kernels}
Step kernels (\cite[\S 7.1]{cf:Lov}) represent one of the fundamental blocks of graphon theory since they provide a way to approximate any $W \in \cW_0$ with step-wise constant functions. A function $W \in \cW$ is a step kernel if there is a partition $\cP = \{S_i\}_{i=1, \dots, k}$ of $[0,1]$ into measurable sets such that $W$ is constant on every product set $S_i \times S_j$. We use the following notation
\begin{equation}
\label{d:stepgraphon}
W_{\cP} (x,y) = \sum_{i,j=1}^k w_{ij} \, \ind_{S_i \times S_j} (x,y), \quad \text{ for } x,y \in [0,1],
\end{equation}
where $\{w_{ij} \}_{i,j =1, \dots, k}$ are bounded real numbers. See Figure \ref{fig:stepgraphon} for an example with equidistant partition. 

\medskip


For a step kernel, condition \eqref{H} is clearly satisfied by taking $\mathcal{J} = \cP$, i.e., $J = \{x_i\}_{i=1,\dots,k}$ with $x_i \in S_i$ for every $i=1,\dots,k$.

Applying Theorem \ref{thm} and Corollary \ref{cor:representation}, we have the following representation for the graphon particle system \eqref{eq:fpBIG} on the step graphon $W_\cP$.

\begin{proposition}
	\label{p:stepgraphon}
	Let $W$ be the step kernel in \eqref{d:stepgraphon} and assume that $\mu_0$ is constant on every $S_i \in \cP$. Then the graphon particle system \eqref{eq:fpBIG} is suitably described by the finite collection of probability measures $\{\mu^i\}_{i=1,\dots, k}$ which solve
	\begin{equation}
	\label{eq:SBM}
	\begin{split}
	\partial_t \mu^i_t (\theta) \, =\,  \frac  12 \partial^2_{\theta }\left({\sigma^2(\theta)} \mu^i_t (\theta)\right) - \partial_{\theta} \left( \mu^i_t (\theta) F(\theta) \right) -\\
	\partial_\theta \left[  \sum_{j=1}^k w_{ij} \mu^i_t (\theta) \, \int_{\bbR} \Gamma(\theta, \theta') \mu^j_t (d \theta') \right],
	\end{split}
	\end{equation}
	and where $\mu^i_0 = \mu_0$, for $i=1,\dots,k$.
\end{proposition}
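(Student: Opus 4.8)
The plan is to read Proposition~\ref{p:stepgraphon} off Theorem~\ref{thm} and Corollary~\ref{cor:representation}, the only subtlety being that one must apply Theorem~\ref{thm} with the refined equivalence relation of Remark~\ref{rem:refinement} rather than with $\sim$ itself. Indeed, when $\mu_0(x) \equiv \mu_0$ the relation \eqref{d:rel} has a single class, so a literal use of Theorem~\ref{thm} would only yield information for kernels with constant degree, whereas a generic step kernel violates \eqref{Hmeanfield}. So I would first fix representatives $x_i \in S_i$, set $J := \{x_1,\dots,x_k\}$ and declare the orbit of $x_i$ to be the block $S_i$ (equivalently: identify $x$ and $y$ when they share both initial law and degree profile, which for $W_\cP$ amounts to lying in a common block, up to possibly merging blocks with identical data — this only makes the final system smaller). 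One then notes that the proof of Theorem~\ref{thm} uses the orbits only through the equi-connectedness condition \eqref{H}, so it applies verbatim once \eqref{H} is verified for the partition $\cP = \{S_i\}_{i=1}^k$.

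That verification is immediate: for $x \in S_i$ the function $y \mapsto W_\cP(x,y)$ is constant, equal to $w_{ij}$, on each block $S_j$, hence $d_{S_j}(x) = \int_{S_j} W_\cP(x,y)\,\dd y = w_{ij}\,|S_j|$ is independent of the chosen $x \in S_i$; equivalently, $d_A(\cdot)$ is constant on each $S_i$ for every $A$ that is a union of blocks. Thus the refined Theorem~\ref{thm} gives $\mu(x) = \mu(x_i) =: \mu^i$ for every $x \in S_i$, so $\{\mu(x)\}_{x\in[0,1]}$ is piecewise constant and is entirely encoded by the finite family $\{\mu^i\}_{i=1}^k$.

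It remains to invoke Corollary~\ref{cor:representation}: $\{\mu^i\}_{i=1}^k$ is the unique solution of the closed finite system \eqref{eq:fpREDUCED}, and substituting $W(x_i,x_j) = w_{ij}$ together with $d_{S_j}(x_i) = w_{ij}|S_j|$ into the nonlocal term of \eqref{eq:fpREDUCED} turns it into \eqref{eq:SBM} (strictly speaking the coefficient of the $\mu^j$-interaction is $w_{ij}|S_j|$, which reduces to the form displayed in \eqref{eq:SBM} for the equidistant partition of Figure~\ref{fig:stepgraphon} or after the harmless rescaling $w_{ij}|S_j| \mapsto w_{ij}$), with $\mu^i_0 = \mu_0$; conversely, setting $\mu_t(\cdot,x) := \mu^i_t$ for $x \in S_i$ recovers the solution of \eqref{eq:fpBIG}, and existence and uniqueness for \eqref{eq:SBM} are the special case of Proposition~\ref{p:existenceUniqueness} for a finite system. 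The whole argument is bookkeeping modulo Theorem~\ref{thm}; the one place deserving care — and the ``hard'' part in spirit — is the passage to the refined equivalence relation of Remark~\ref{rem:refinement}, i.e.\ checking that grouping vertices by ``same initial law and same degree profile'' rather than by ``same initial law'' alone does not disturb the Gr\"onwall/fixed-point estimates underlying Theorem~\ref{thm}; for step kernels this is transparent since each block is simultaneously homogeneous for $\mu_0$ and for $W_\cP$ relative to every other block.
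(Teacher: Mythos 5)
Your proposal is correct and follows exactly the route the paper takes (the paper gives no separate proof, deriving the proposition by applying Theorem \ref{thm} with the refined partition of Remark \ref{rem:refinement} given by the blocks $S_i$ and then invoking Corollary \ref{cor:representation}); your verification of \eqref{H} for the block partition is the same observation the paper makes just before the statement. Your remark that the interaction coefficient produced by \eqref{eq:fpREDUCED} is really $w_{ij}\,|S_j|$ rather than $w_{ij}$ is a fair and careful catch of a normalization the paper leaves implicit, but it does not change the substance of the argument.
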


As shown in Proposition \ref{p:stepgraphon}, the representation given in Corollary \ref{cor:representation} becomes natural in the case of step kernels. While Proposition \ref{p:stepgraphon} could be derived with direct computations\footnote{A step kernel can be seen as a Stochastic Block Model, for which the representation \eqref{eq:SBM} is somehow known.}, Theorem \ref{thm} goes a step further: if one replaces every single constant block $S_i \times S_j$ in \eqref{d:stepgraphon} by a (suitable scaled) graphon with constant degree, equation \eqref{eq:SBM} does not change.

We also observe that combining Proposition \ref{p:stepgraphon} with the continuity estimates as in Remark \ref{rem:continuity}, allows to approximate the graphon particle system \eqref{eq:fpBIG} with a finite number of coupled Fokker-Planck equations. Since it is beyond the scope of this note, we do not purse such analysis, yet we make this point slightly more precise in the next remark.

\begin{rem}
	By using the Weak Regularity Lemma {\cite[Corollary 9.13]{cf:Lov}}, one can approximate every graphon by a step kernel with an explicit control on the error. Assuming the continuity of \eqref{eq:fpBIG} with respect to $W$, recall Remark \ref{rem:continuity}, Proposition \ref{p:stepgraphon} opens the way to approximate the infinite graphon particle system \eqref{eq:fpBIG} by using a finite (thus apriori numerically solvable) system of coupled Fokker-Planck equations, with precise bounds on the error.
	
	Observe that the decay rate strongly depends on how well the graphon can be approximated by step kernels. For a general graphon, there exists a step kernel with $k^2$ values, whose cut-distance is at most $2/\sqrt{\log(k)}$, see {\cite[Corollary 9.13]{cf:Lov}}.
\end{rem}

\section{Proof of the main result}
\label{s:proof}

\subsection{Distance between probability measures}
For two probability measures $\bar{\mu},\, \bar{\nu} \in \cP(C([0,T], \bbR))$, define their 2-Wasserstein distance as
\begin{equation}
\label{d:wass}
D_T (\bar{\mu},\bar{\nu})  = \inf_{X,Y} \left\{ \bE \left[ \sup_{t \in [0,T]} \left\vert X_t - Y_t\right\vert^2 \right] : \cL(X) = \bar{\mu}, \, \cL(Y) = \bar{\nu} \right\}^{1/2}
\end{equation}
where the infimum is taken on all random variables $X$ and $Y$ with values in $C([0,T], \bbR)$ and law $\cL$ equal to $\bar{\mu}$ and $\bar{\nu}$ respectively. From \eqref{d:wass} we obtain that for every $s \in[0,T]$ and for every bounded 1-Lipschitz function $f$
\begin{equation}
\label{d:lipsc}
\left\vert\int_\bbR f(\theta) \, \bar{\mu}_s (d \theta) - \int_\bbR f(\theta) \, \bar{\nu}_s (d \theta)\right\vert = \left\vert\int_\bbR f(\theta) \, \left[\bar{\mu}_s (d \theta) -  \bar{\nu}_s (d \theta)\right]\right\vert \leq D_s(\bar{\mu},\bar{\nu}).
\end{equation}
Observe that \eqref{d:wass} also makes sense with $T=0$ and $C([0,T], \bbR)$ replaced by $\bbR$.

\subsection{Proof of Theorem \ref{thm}}
Recall the definition of the 2-Wasserstein distance $D_T$ in \eqref{d:wass}, we aim at showing that
\begin{equation}
\label{goal}
\maxtwo{\bar{x} \in J}{x \in [\bar{x}]} D_T (\mu(\bar{x}), \mu(x)) = 0.
\end{equation}
We first assume $F \equiv 0$ and $\sigma \equiv 1$.

Let $\bar{x} \in J$ and $x \in [\bar{x}]$, without loss of generality, we can suppose that the associated realizations of the Brownian motion are the same. Using the fact that $x \in [\bar{x}]$, the initial conditions cancel and we have that
\begin{equation}
\begin{split}
\theta^x_t - \theta^{\bar{x}}_t =& \int_0^t \int_0^1 W(x,y) \int_\bbR \Gamma(\theta^x_s, \theta) \mu(d \theta, y) d y \, d s \\
&- \int_0^t \int_0^1 W(\bar{x},y) \int_\bbR \Gamma(\theta^{\bar{x}}_s, \theta) \mu(d \theta, y) d y \, d s,
\end{split}
\end{equation}
In particular, this can be rewritten as
\begin{equation}
\begin{split}
& \int_0^t \int_0^1 W(x,y) \int_\bbR \left[ \Gamma(\theta^x_s, \theta) - \Gamma(\theta^{\bar{x}}_s, \theta)\right] \mu(d \theta, y) d y \, d s \\
&+ \int_0^t \int_0^1 \left[W(x,y) - W(\bar{x},y)\right] \int_\bbR \Gamma(\theta^{\bar{x}}_s, \theta) \mu(d \theta, y) d y \, d s.
\end{split}
\end{equation}
We now use hypothesis \eqref{H} and add the following term in the previous equation
\begin{equation}
0 = \int_J \left[\int_{[\bar{y}]} \left[W(x,y) - W(\bar{x}, y)\right] \int_\bbR \Gamma(\theta^{\bar{x}}_s, \theta) \mu_s(d \theta, \bar{y}) d y \right]d \bar{y},
\end{equation}
where the integrals are sums whenever $J$ or $[\bar{y}]$ are countable (recall that $[\bar{y}]$ is a measurable subset of $[0,1]$ from the hypothesis).

By taking the squares and using $(a+b)^2 \leq 2(a^2 + b^2)$, this leads to
\begin{equation}
\begin{split}
&\left\vert\theta^x_t - \theta^{\bar{x}} _t\right\vert^2 \leq 2T^2 \int_0^t \left\vert \theta^x_s - \theta^{\bar{x}} _s \right\vert^2 d s + \\
&+ 2T^2 \int_0^t \int_J \left\vert \int_{[\bar{y}]} [W(x,y) - W(\bar{x}, y)] \int_\bbR \Gamma(\theta^{\bar{x}}_s, \theta) [\mu_s(d \theta, y) - \mu_s (d \theta, \bar{y})] d y \right\vert^2 d \bar{y} \, d s,
\end{split}
\end{equation}
where we have used Cauchy-Schwartz inequality as well as the fact that $\Gamma$ is 1-Lipschitz. Applying Cauchy-Schwartz again, and using \eqref{d:lipsc}, we are left with
\begin{equation}
\begin{split}
\left\vert\theta^x_t - \theta^{\bar{x}} _t\right\vert^2 \leq 2T^2 \int_0^t \left\vert \theta^x_s - \theta^{\bar{x}} _s \right\vert^2 d s + 4T^2 \int_0^t \int_J \left[\int_{[\bar{y}]} D^2_s (\mu(y), \mu(\bar{y}))d y\right] \, d \bar{y} \, d s,
\end{split}
\end{equation}
Thus, taking the supremum with respect to the time and the expected value $\bE$ this leads to
\begin{equation}
\label{last}
\begin{split}
\bE\left[ \sup_{t \in [0,T]} \left\vert\theta^x_t - \theta^{\bar{x}} _t\right\vert^2 \right] \leq& 2T^2 \int_0^T \bE\left[ \sup_{u \in [0,s]} \left\vert\theta^x_u - \theta^{\bar{x}} _u\right\vert^2 \right] d s + \\
& + 4T^2 \int_0^T \int_J \left[\int_{[\bar{y}]} D^2_s (\mu(y), \mu(\bar{y}))d y\right] \, d \bar{y} \, d s.
\end{split}
\end{equation}
Finally, we can take the maximum with respect to $\bar{x} \in J$ and $x \in [\bar{x}]$ and use the characterization of \eqref{d:wass}, to obtain
\begin{equation}
\maxtwo{\bar{x} \in J}{x \in [\bar{x}]} D^2_T (\mu(\bar{x}), \mu(x)) \leq 6T^2 \int_0^T \maxtwo{\bar{x} \in J}{x \in [\bar{x}]} D^2_s (\mu(\bar{x}), \mu(x)) d s,
\end{equation}
which implies \eqref{goal}.

Whenever $F$ is not zero, the proof is basically the same: using the Lipschitz properties of $F$,  a term equal to $\left\vert \theta^x_s - \theta^{\bar{x}} _s \right\vert^2$ appears, thus adding the constant factor of $2T^2$ to the final equation \eqref{last}.

If $\sigma$ is not constant, we can handle the extra term by using the Burkholder-Davis-Gundy inequality \cite[Theorem 3.28]{karatzasShreve1996}. Indeed, one can bound the expectation of the stochastic term by its quadratic variation. Using the fact that $\sigma$ is Lipschitz, this yields
\begin{equation*}
\bE\left[ \sup_{t \in [0,T]} \left\vert\int_0^T \left(\sigma(\theta^x_s) - \sigma(\theta^{\bar{x}}_s) \right) d B_s \right\vert^2 \right] \leq C T^2 	\int_0^T \bE\left[ \sup_{u \in [0,s]} \left\vert\theta^x_u - \theta^{\bar{x}} _u\right\vert^2 \right] d s,
\end{equation*}
with $C$ a universal positive constant. The rest of the proof remains unchanged.
\qed

\section{Acknowledgments}

The author is thankful to Gianmarco Bet, Giambattista Giacomin and Francesca Nardi for discussions while writing this note.

The author would like to thank the anonymous referee for valuable suggestions on the presentation of the results.

\bibliographystyle{abbrv}
\bibliography{biblio}

\begin{thebibliography}{10}

\bibitem{cf:BCW20}
E.~Bayraktar, S.~Chakraborty, and R.~Wu.
\newblock Graphon mean field systems.
\newblock {\em arXiv:2003.13180 [math]}, Mar. 2020.

\bibitem{cf:BW20}
E.~Bayraktar and R.~Wu.
\newblock Stationarity and uniform in time convergence for the graphon particle
  system.
\newblock {\em arXiv:2008.10173 [math]}, 2020.

\bibitem{cf:BCN20}
G.~Bet, F.~Coppini, and F.~R. Nardi.
\newblock Weakly interacting oscillators on dense random graphs.
\newblock {\em arXiv:2006.07670 [math]}, June 2020.

\bibitem{cf:BBW}
S.~Bhamidi, A.~Budhiraja, and R.~Wu.
\newblock Weakly interacting particle systems on inhomogeneous random graphs.
\newblock {\em Stochastic Processes and their Applications}, 129(6):2174--2206,
  June 2019.

\bibitem{cf:CH18}
P.~E. Caines and M.~Huang.
\newblock Graphon {Mean} {Field} {Games} and the {GMFG} {Equations}.
\newblock {\em 2018 IEEE Conference on Decision and Control (CDC)}, 2018.

\bibitem{cf:CCGL19}
R.~Carmona, D.~Cooney, C.~Graves, and M.~Lauriere.
\newblock Stochastic {Graphon} {Games}: {I}. {The} {Static} {Case}.
\newblock {\em arXiv:1911.10664 [math]}, Nov. 2019.

\bibitem{cf:CM19}
H.~Chiba and G.~S. Medvedev.
\newblock The mean field analysis of the {Kuramoto} model on graphs i. {The}
  mean field equation and transition point formulas.
\newblock {\em Discrete \& Continuous Dynamical Systems - A}, 39(1):131, 2019.

\bibitem{cf:C19}
F.~Coppini.
\newblock Long time dynamics for interacting oscillators on graphs.
\newblock {\em arXiv:1908.01520 [math]}, Dec. 2019.

\bibitem{cf:CDG}
F.~Coppini, H.~Dietert, and G.~Giacomin.
\newblock A law of large numbers and large deviations for interacting
  diffusions on {Erdős}–{Rényi} graphs.
\newblock {\em Stochastics and Dynamics}, 20(02):2050010, Apr. 2020.

\bibitem{cf:DGL}
S.~Delattre, G.~Giacomin, and E.~Luçon.
\newblock A {Note} on {Dynamical} {Models} on {Random} {Graphs} and
  {Fokker}-{Planck} {Equations}.
\newblock {\em Journal of Statistical Physics}, 165(4):785--798, Nov. 2016.

\bibitem{cf:delmas20}
J.-F. Delmas, D.~Dronnier, and P.-A. Zitt.
\newblock An {Infinite}-{Dimensional} {SIS} {Model}.
\newblock {\em arXiv:2006.08241 [math]}, June 2020.

\bibitem{cf:DJ08}
P.~Diaconis and S.~Janson.
\newblock Graph limits and exchangeable random graphs.
\newblock {\em Rendiconti di Matematica}, 28:33--61, 2008.

\bibitem{dudley}
R.~Dudley.
\newblock {\em Real {Analysis} and {Probability}}.
\newblock Cambridge University Press, 2 edition, 2002.

\bibitem{cf:KM17}
D.~S. Kaliuzhnyi-Verbovetskyi and G.~S. Medvedev.
\newblock The {Mean} {Field} {Equation} for the {Kuramoto} {Model} on {Graph}
  {Sequences} with {Non}-{Lipschitz} {Limit}.
\newblock {\em SIAM J. Math. Analysis}, 2017.

\bibitem{karatzasShreve1996}
I.~Karatzas and S.~E. Shreve.
\newblock {\em Brownian motion and {Stochastic} {Calculus}}.
\newblock Number 113 in Graduate texts in mathematics. Springer, New York, 2
  edition, 1996.

\bibitem{cf:KHT20}
D.~Keliger, I.~Horvath, and B.~Takacs.
\newblock Local-density dependent {Markov} processes on graphons with
  epidemiological applications.
\newblock {\em arXiv:2008.08109 [math]}, 2020.

\bibitem{cf:Lov}
L.~Lovász.
\newblock {\em Large {Networks} and {Graph} {Limits}}, volume~60 of {\em
  Colloquium {Publications}}.
\newblock American mathematical society edition, 2012.

\bibitem{cf:LS06}
L.~Lovász and B.~Szegedy.
\newblock Limits of dense graph sequences.
\newblock {\em Journal of Combinatorial Theory, Series B}, 96(6):933--957, Nov.
  2006.

\bibitem{cf:L18}
E.~Luçon.
\newblock Quenched asymptotics for interacting diffusions on inhomogeneous
  random graphs.
\newblock {\em Stochastic Processes and their Applications}, 2020.

\bibitem{cf:LS14}
E.~Luçon and W.~Stannat.
\newblock Mean field limit for disordered diffusions with singular
  interactions.
\newblock {\em The Annals of Applied Probability}, 24(5):1946--1993, Oct. 2014.

\bibitem{cf:medvedev}
G.~S. Medvedev.
\newblock The continuum limit of the {Kuramoto} model on sparse random graphs.
\newblock {\em arXiv:1802.03787 [nlin]}, Feb. 2018.

\bibitem{cf:parise20}
F.~Parise and A.~Ozdaglar.
\newblock Graphon games: {A} statistical framework for network games and
  interventions.
\newblock {\em arXiv:1802.00080 [cs]}, June 2020.

\bibitem{cf:RO}
G.~H. Reis and R.~I. Oliveira.
\newblock Interacting diffusions on random graphs with diverging degrees:
  hydrodynamics and large deviations.
\newblock {\em Journal of Statistical Physics}, 2019.

\bibitem{cf:szni}
A.-S. Sznitman.
\newblock Topics in propagation of chaos.
\newblock In P.-L. Hennequin, editor, {\em Ecole d'{Eté} de {Probabilités} de
  {Saint}-{Flour} {XIX} - 1989}, volume 1464, pages 165--251. Springer Berlin
  Heidelberg, 1991.

\end{thebibliography}

\end{document}